\newtheorem{thm}{Theorem}[section]
\newtheorem{lem}{Lemma}[section]
\newtheorem{defi}{Definition}[section]
\newtheorem{ex}{Example}[section]
\newtheorem{rem}{Remark}[section]
\begin{document}

\title{Some consequences of shadowing and entropy-expansiveness}
\author{Noriaki Kawaguchi}
\subjclass[2020]{37B20, 37B40, 37B65}
\keywords{shadowing, h-expansive, uniformly rigid, periodic points, zero-dimensional}
\address{Department of Mathematical and Computing Science, School of Computing, Institute of Science Tokyo, 2-12-1 Ookayama, Meguro-ku, Tokyo 152-8552, Japan}
\email{gknoriaki@gmail.com}

\begin{abstract}
We show that if a continuous self-map of a compact metric space is h-expansive and satisfies the shadowing property, then every non-empty uniformly rigid subset is zero-dimensional, and hence the set of periodic points is also zero-dimensional if it is not empty.
\end{abstract}

\maketitle

\markboth{NORIAKI KAWAGUCHI}{Some consequences of shadowing and entropy-expansiveness}

\section{Introduction}

{\em Shadowing} is an important notion in the qualitative theory of dynamical systems. Originally introduced in the context of hyperbolic differentiable dynamics \cite{A,B2}, it describes the phenomenon where coarse orbits, or {\em pseudo-orbits}, are closely approximated by true orbits. For an overview of shadowing theory, we refer the reader to the monographs \cite{AH,P}. The notion of {\em rigidity}, first introduced in \cite{GM} as uniform recurrence in topological dynamics, has attracted considerable attention. In particular, \cite{AGHSY} provides results on rigid subsets and their relation to chaotic properties in dynamical systems.

In this paper, we show that if a continuous self-map of a compact metric space is h-expansive and satisfies the shadowing property, then every non-empty uniformly rigid subset must be zero-dimensional, and consequently, the set of periodic points is also zero-dimensional if not empty. The implications of shadowing have been explored in various studies, including \cite{LO1,LO2,Moo,MO}. Notably, it has been shown that for a continuous self-map of a compact metric space with the shadowing property, the set of regularly recurrent points is dense within the chain recurrent set.

We begin with the definition of shadowing property. Throughout, $X$ denotes a compact metric space endowed with a metric $d$. 

\begin{defi}
\normalfont
Let $f\colon X\to X$ be a continuous map and let $\xi=(x_i)_{i\ge0}$ be a sequence of points in $X$. For $\delta>0$, $\xi$ is called a {\em $\delta$-pseudo orbit} of $f$ if $d(f(x_i),x_{i+1})\le\delta$ for all $i\ge0$. For $\epsilon>0$, $\xi$ is said to be {\em $\epsilon$-shadowed} by $x\in X$ if $d(f^i(x),x_i)\leq \epsilon$ for all $i\ge 0$. We say that $f$ has the {\em shadowing property} if for any $\epsilon>0$, there is $\delta>0$ such that every $\delta$-pseudo orbit of $f$ is $\epsilon$-shadowed by some point of $X$.
\end{defi}

We recall the definition of h-expansiveness \cite{B1}. Let $f\colon X\to X$ be a continuous map and let $K$ be a subset of $X$. For $n\ge1$ and $r>0$, a subset $E$ of $K$ is said to be {\em $(n,r)$-separated} if 
\[
\max_{0\le i\le n-1}d(f^i(x),f^i(y))>r
\]
for all $x,y\in E$ with $x\ne y$. For $n\ge1$ and $r>0$, let $s_n(f,K,r)$ denote the largest cardinality of an $(n,r)$-separated subset of $K$. We define $h(f,K,r)$ and $h(f,K)$ by
\[
h(f,K,r)=\limsup_{n\to\infty}\frac{1}{n}\log{s_n(f,K,r)}
\]
and $h(f,K)=\lim_{r\to0}h(f,K,r)$. The topological entropy $h_{\rm top}(f)$ of $f$ is defined by $h_{\rm top}(f)=h(f,X)$ (see, e.g., \cite{W} for more details).

\begin{defi}
\normalfont
For a continuous map $f\colon X\to X$, let
\[
\Phi_{\epsilon}(x)=\{y\in X\colon d(f^i(x),f^i(y))\le\epsilon\:\:\text{for all $i\ge0$}\}
\]
for $x\in X$ and $\epsilon>0$; and let
\[
h_f^\ast(\epsilon)=\sup_{x\in X}h(f,\Phi_{\epsilon}(x))
\]
for $\epsilon>0$. We say that $f$ is {\em h-expansive} if $h_f^\ast(\epsilon)=0$ for some $\epsilon>0$. 
\end{defi}

\begin{rem}
\normalfont
If a continuous map $f\colon X\to X$ is h-expansive, then $h_{\rm top}(f)<\infty$.
\end{rem}

We recall the definition of zero-dimensional subsets. For background on the topological theory of dimension, we refer the reader to \cite{M}. 

\begin{defi}
\normalfont
We say that a subset $S$ of $X$ is {\em zero-dimensional} if it is not empty and if for any $x\in S$ and any open subset $U$ of $S$ with $x\in U$, there is a clopen subset $V$ of $S$ such that $x\in V\subset U$.
\end{defi}

\begin{rem}
\normalfont
A subset $S$ of $X$ is said to be {\em totally disconnected} if it is empty or if every connected component of $S$ is a singleton. We know that if a subset $S$ of $X$ is zero-dimensional, then $S$ is totally disconnected; and the converse holds when $S$ is a non-empty closed subset of $X$.
\end{rem}

Let us recall the definition of uniformly rigid subsets \cite{AGHSY}.

\begin{defi}
\normalfont
Given a continuous map $f\colon X\to X$, we say that a subset $S$ of $X$ is {\em uniformly rigid} if for any $\gamma>0$, there is $l>0$ such that
\[
\sup_{x\in S}d(x,f^l(x))\le\gamma.
\]  
\end{defi}

\begin{rem}
\normalfont
Let $f\colon X\to X$ be a continuous map. If a subset $S$ of $X$ is uniformly rigid, then
\begin{itemize}
\item every subset of $S$ is uniformly rigid,
\item the closure $\overline{S}$ is uniformly rigid.
\end{itemize}
\end{rem}

The main result of this paper is the following theorem.

\begin{thm}
If a continuous map $f\colon X\to X$ is h-expansive and has the shadowing property, then every non-empty uniformly rigid subset $S$ of $X$ is zero-dimensional.
\end{thm}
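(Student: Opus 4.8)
\medskip
\noindent\emph{Sketch of the intended proof.} The plan is to argue by contradiction, extracting from the failure of the conclusion a non-degenerate uniformly rigid subcontinuum and using it to produce a point whose dynamical $\epsilon_0$-ball carries positive entropy, contrary to h-expansiveness.

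Suppose $S$ is not zero-dimensional. Since every subset of a zero-dimensional set is zero-dimensional and $\overline{S}$ is uniformly rigid, $\overline{S}$ is not zero-dimensional either; being a non-empty compact subset of $X$, by the last remark it is then not totally disconnected, so it has a connected component $C$ with more than one point. Thus $C$ is a non-degenerate subcontinuum of $X$ and is uniformly rigid. Fix $\epsilon_0>0$ with $h_f^{\ast}(\epsilon_0)=0$; the goal is to find $z\in X$ with $h(f,\Phi_{\epsilon_0}(z))>0$.

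The construction goes as follows. First use shadowing and uniform rigidity to render the dynamics near $C$ trivial: given a small $\epsilon>0$, let $\delta>0$ be a shadowing constant for $\epsilon$; by uniform rigidity choose a return time $l$ with $\sup_{x\in C}d(x,f^l(x))<\delta$, and for each $x\in C$ shadow the $l$-periodic pseudo orbit $x,f(x),\dots,f^{l-1}(x),x,\dots$; a standard periodic-shadowing argument lets us take the shadowing point $w(x)$ with $f^l(w(x))=w(x)$ and $d(x,w(x))\le\epsilon$. Put $W=\overline{\{w(x):x\in C\}}$, a compact subset of $\mathrm{Fix}(f^l)$ within Hausdorff distance $\epsilon$ of $C$; so $\mathrm{diam}(W)\ge\mathrm{diam}(C)-2\epsilon>0$, any two points of $W$ are joined by a $\rho$-chain inside $W$ for every $\rho>2\epsilon$, and $g:=f^l$ acts as the identity on $W$. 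Now choose $u,v\in W$ with $d(u,v)=\mathrm{diam}(W)$, a $3\epsilon$-chain $u=z_0,\dots,z_m=v$ inside $W$ together with its reverse, and for each word $c\in\{0,1\}^n$ build a $g$-pseudo orbit that stays inside $W$ and sits at $u$ or $v$ at the times $0,m,2m,\dots,(n-1)m$ as prescribed by $c$, using length-$m$ blocks that either keep the current base point or traverse the chain to the other one; since $g|_W=\mathrm{id}$, all jumps have size $<3\epsilon$, so these are genuine $g$-pseudo orbits, and we shadow them, with some precision $\epsilon'$, by points $w_c$. For $c\ne c'$ differing first at index $j$ one gets $d(g^{jm}(w_c),g^{jm}(w_{c'}))\ge\mathrm{diam}(W)-2\epsilon'$, so $\{w_c:c\in\{0,1\}^n\}$ is $(nm,r)$-separated with $r\asymp\mathrm{diam}(C)>0$; and since each such $g$-pseudo orbit lies in $W\subseteq\mathrm{Fix}(g)$, each $w_c$ lies in the $g$-analogue of $\Phi_{\epsilon_{\ast}}(u)$ as soon as $\epsilon'+\mathrm{diam}(W)\le\epsilon_{\ast}$. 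Since $g=f^l$ is again h-expansive with a constant $\epsilon_{\ast}>0$ determined by $\epsilon_0$ and the moduli of continuity of $f,\dots,f^{l-1}$, we obtain a set of entropy at least $(\log 2)/m>0$ inside a dynamical $\epsilon_{\ast}$-ball of $g$, contradicting h-expansiveness of $g$, hence of $f$.

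The hard part will be balancing the constants, which are coupled: the separation $r\asymp\mathrm{diam}(C)$ must exceed $\epsilon'$, the confinement forces $\mathrm{diam}(W)$ (essentially $\mathrm{diam}(C)$) to be small relative to $\epsilon_{\ast}$, the quantity $\epsilon_{\ast}$ degrades as $l$ grows, and $l$ is forced up precisely to push the rigidity error below the shadowing constant for $\epsilon$. To handle this I would not use $C$ itself but, before fixing $\epsilon$, pass via the boundary bumping lemma for continua (if $K$ is a continuum and $U\subsetneq K$ is open then every component of $\overline{U}$ meets $\partial U$; apply it with $U=B(p,\rho)\cap C$) to a subcontinuum $C'\subseteq C$ whose diameter is as small as required relative to the moduli of $f,\dots,f^{l-1}$ — so that $f^i(C')$ stays within $\epsilon_{\ast}$ for $0\le i<l$ — while boundary bumping keeps $\mathrm{diam}(C')\ge\rho$, so the separation does not collapse. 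Selecting the parameters in the order $\epsilon\to l\to\epsilon_{\ast}\to$ (shrink $C$ to $C'$) $\to$ run the coding on $C'$, and exploiting at each stage the joint equicontinuity of the finite family $f^0,\dots,f^{l-1}$, one closes the loop; making this balancing precise is the main obstacle.
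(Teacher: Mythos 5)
Your overall strategy (pass to a non-degenerate uniformly rigid subcontinuum, code $\{0,1\}^{\mathbb N}$ into shadowed pseudo-orbits confined to a single dynamical $\epsilon_0$-ball, and contradict h-expansiveness via $h(\sigma)=\log 2$) is the same as the paper's, but two steps of your construction do not go through as written.

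First, the periodic-shadowing step is unjustified: the shadowing property does \emph{not} imply that the periodic pseudo-orbit $x,f(x),\dots,f^{l-1}(x),x,\dots$ is shadowed by a point $w(x)$ with $f^l(w(x))=w(x)$. The paper's own odometer example is h-expansive, has shadowing, and is uniformly rigid, yet $Per(g_m)=\emptyset$, so no such $w(x)$ exists there; all that shadowing gives you is a non-empty compact set of $\epsilon$-shadowing points that is mapped into itself by $f^l$, which yields no fixed point in a general compact metric space. Your construction leans on $g|_W=\mathrm{id}$ to make the concatenated blocks genuine $g$-pseudo-orbits, so this is not cosmetic. (It is repairable: uniform rigidity already gives $d(x,g(x))\le\gamma$ for $x\in C'$, so chains inside $C'$ itself are $g$-pseudo-orbits with jumps enlarged by $\gamma$; this is essentially what the paper does, using honest $f$-orbit segments of length $l$ followed by a small jump.)

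Second, and more fundamentally, the constant-balancing you flag as ``the main obstacle'' is genuinely circular in your scheme, and the proposed fix (boundary bumping plus equicontinuity of the finite family $f^0,\dots,f^{l-1}$) does not break the cycle. The confinement of the shadowing points inside one $\Phi_{\epsilon_0}$-ball forces $2\epsilon'+\mathrm{diam}(W)\le\epsilon_\ast(l)$, where $\epsilon_\ast(l)$ degrades as $l$ grows; $l$ is forced large to push the rigidity error below the shadowing constant for $\epsilon$; and $\epsilon,\epsilon'$ must in turn be small compared to $\mathrm{diam}(C')\le\epsilon_\ast(l)$ for the $(nm,r)$-separation not to collapse. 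Following your order $\epsilon\to l\to\epsilon_\ast\to C'$, nothing prevents $\epsilon_\ast(l(\epsilon))$ from being far smaller than $\epsilon$, at which point $W$ degenerates. The missing ingredient is the paper's Lemma 2.1: a \emph{separate} entropy/coding argument showing that a sufficiently small connected uniformly rigid set $C$ satisfies $\sup_{i\ge0}\mathrm{diam}\,f^i(C)\le\beta$ for \emph{all} iterates, not just $i<l$. This cannot be extracted from uniform continuity of finitely many iterates, and once it is available the confinement scale becomes independent of $l$, so the single remaining coding argument (the paper's Lemma 2.2, which is close in spirit to your out-and-back chain construction) closes the proof. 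In short, your proposal contains one coding argument where two are needed.
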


The following lemma is a special case of the so-called {\em countable closed sum theorem} (see Theorem 3.2.8 of \cite{M}).

\begin{lem}
For any sequence $A_j$, $j\ge1$, of closed subsets of $X$, if $A_j$ is empty or zero-dimensional for all $j\ge1$, then $\bigcup_{j\ge1}A_j$ is either empty or zero-dimensional.
\end{lem}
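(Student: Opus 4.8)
The plan is to deduce the statement from the countable closed sum theorem (Theorem 3.2.8 of \cite{M}), after matching up the dimension conventions and disposing of the trivial cases. If every $A_j$ is empty, then $\bigcup_{j\ge1}A_j$ is empty and there is nothing to prove; so we may assume $S:=\bigcup_{j\ge1}A_j$ is non-empty and, discarding the empty members of the sequence, that each $A_j$ is a non-empty zero-dimensional closed subset of $X$. Since $X$ is compact metric, every subset of $X$ --- in particular $S$ and each $A_j$ --- is separable and metrizable; and for separable metrizable spaces the property used in our definition of zero-dimensionality (a neighborhood base of clopen sets at each point, i.e.\ small inductive dimension $0$) coincides with having covering dimension at most $0$, by the standard agreement of the dimension functions recorded in \cite{M}. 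Hence each $A_j$ has covering dimension $\le 0$.

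Next we note that each $A_j$, being closed in $X$ and contained in $S$, is closed in the subspace $S$, so $\{A_j\}_{j\ge1}$ is a countable closed cover of $S$ by subspaces of covering dimension $\le 0$. The countable closed sum theorem then gives $\dim S\le\sup_{j\ge1}\dim A_j\le 0$. Since $S\ne\emptyset$, this says precisely that $S$ is zero-dimensional in the sense of our definition, which is the desired conclusion.

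As we are invoking a cited theorem, no real obstacle is expected; but for completeness we indicate what a self-contained argument must accomplish and where its difficulty lies. Fix $p\in S$ and a closed set $F\subseteq S$ with $p\notin F$; the goal is a clopen $V\subseteq S$ with $p\in V\subseteq S\setminus F$. Using normality of $S$ together with the zero-dimensionality of the $A_j$, one constructs inductively open subsets $W_n$ of $S$, each separating $\{p\}$ from $F$, whose topological boundaries avoid $A_1\cup\cdots\cup A_n$, and which converge to an open set $V$ whose boundary in $S$ avoids every $A_j$; since $S=\bigcup_j A_j$, that boundary is then empty, so $V$ is clopen. The delicate point is to run this induction so that the boundaries stay off the accumulated union $\bigcup_{j\le n}A_j$ while the sets $W_n$ continue to separate $\{p\}$ from $F$ and converge appropriately --- precisely the bookkeeping performed in the proof of Theorem 3.2.8 of \cite{M}, which is why citing it is the efficient route.
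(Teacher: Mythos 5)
Your proposal is correct and follows the same route as the paper, which simply records this lemma as a special case of the countable closed sum theorem (Theorem 3.2.8 of \cite{M}) without further proof. Your additional care in discarding empty members, noting that each $A_j$ is closed in the union, and reconciling the clopen-base definition with covering dimension for separable metrizable spaces is exactly the bookkeeping needed to make that citation legitimate.
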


Given a continuous map $f\colon X\to X$, we denote by $Per(f)$ the set of periodic points for $f$:
\[
Per(f)=\bigcup_{j\ge1}\{x\in X\colon f^j(x)=x\}.
\]
By Theorem 1.1 and Lemma 1.1, we obtain the following theorem.

\begin{thm}
If a continuous map $f\colon X\to X$ is h-expansive and has the shadowing property, then $Per(f)$ is either empty or zero-dimensional.
\end{thm}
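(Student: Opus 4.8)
The plan is to realize $Per(f)$ as a countable union of closed, uniformly rigid sets and then invoke Theorem 1.1 together with Lemma 1.1. For each $j\ge1$, set $A_j=\{x\in X\colon f^j(x)=x\}$, so that $Per(f)=\bigcup_{j\ge1}A_j$ by definition. First I would observe that $A_j$ is closed: it is the zero set of the continuous function $x\mapsto d(f^j(x),x)$ on $X$, or equivalently the preimage of the diagonal $\Delta_X\subset X\times X$ under the continuous map $x\mapsto(f^j(x),x)$.

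Next I would check that each $A_j$ is uniformly rigid, which is the only genuinely dynamical input and is immediate: every $x\in A_j$ satisfies $f^j(x)=x$, hence $f^l(x)=x$ whenever $l$ is a positive multiple of $j$; so for any $\gamma>0$ one may take $l=j$ and obtain $\sup_{x\in A_j}d(x,f^l(x))=0\le\gamma$. (If $A_j=\emptyset$ the condition holds vacuously.) Thus $A_j$ is uniformly rigid for every $j\ge1$.

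Now I would use the hypotheses: since $f$ is h-expansive and has the shadowing property, Theorem 1.1 applies to every non-empty uniformly rigid subset of $X$, and therefore each $A_j$ is either empty or zero-dimensional. As each $A_j$ is closed, Lemma 1.1 then yields that $\bigcup_{j\ge1}A_j=Per(f)$ is either empty or zero-dimensional, which is the assertion.

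I do not anticipate a real obstacle here: the substance of the argument is entirely contained in Theorem 1.1, while closedness and uniform rigidity of the $A_j$, together with the countable closed sum theorem, are routine. The only point deserving a word of care is the bookkeeping around emptiness: one must allow some (or all) of the $A_j$ to be empty, apply Theorem 1.1 only to the non-empty ones, use that Lemma 1.1 explicitly permits empty members, and state the conclusion as \emph{empty or zero-dimensional} rather than outright zero-dimensional.
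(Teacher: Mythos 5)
Your proof is correct and follows essentially the same route as the paper: decompose $Per(f)$ as the union of the closed sets $Fix(f^j)$, observe each is uniformly rigid (indeed with $l=j$ giving $\sup_{x}d(x,f^l(x))=0$), apply Theorem 1.1 to each non-empty one, and finish with Lemma 1.1. Your extra care about emptiness and the closedness argument are both sound and match the paper's intent.
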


\begin{proof}[Proof of Theorem 1.2]
Note that
\[
Fix(f^j)=\{x\in X\colon f^j(x)=x\}
\]
is uniformly rigid for all $j\ge1$. Since $f\colon X\to X$ is h-expansive and has the shadowing property, by Theorem 1.1, $Fix(f^j)$ is empty or zero-dimensional for all $j\ge1$. Since
\[
Per(f)=\bigcup_{j\ge1}Fix(f^j),
\]
and $Fix(f^j)$ is a closed subset of $X$ for all $j\ge1$, by Lemma 1.1, we conclude that $Per(f)$ is either empty or zero-dimensional.
\end{proof}

This paper consists of three sections and an appendix. In the next section, we prove Theorem 1.1. Several examples are given in Section 3. In Appendix A, we prove Theorem A.1. 

\section{Proof of Theorem 1.1}

In this section, we prove Theorem 1.1. For the proof, we need a few lemmas. For a subset $S$ of $X$, we denote by ${\rm diam}\:S$ the diameter of $S$:
\[
{\rm diam}\:S=\sup_{x,y\in S}d(x,y).
\]
Given a continuous map $f\colon X\to X$ and $\delta>0$, a finite sequence $(x_i)_{i=0}^k$ of points in $X$, where $k>0$, is called a {\em $\delta$-chain} of $f$ if $d(f(x_i),x_{i+1})\le\delta$ for all $0\le i\le k-1$.

\begin{lem}
Let $f\colon X\to X$ be a continuous map which is h-expansive and satisfies the shadowing property. Given any $\beta>0$, there is $\gamma>0$ such that for any subset $C$ of $X$, if
\begin{itemize}
\item ${\rm diam}\:C\le\gamma$,
\item $C$ is connected and uniformly rigid,
\end{itemize}
then $\sup_{i\ge0}{\rm diam}\:f^i(C)\le\beta$.
\end{lem}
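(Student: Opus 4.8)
\emph{Proof proposal (plan).}
The plan is to argue by contradiction, using shadowing together with the connectedness and uniform rigidity of $C$ to build a ``coded'' family of genuine orbits, and then using h-expansiveness to contradict the entropy such a family would carry. Fix $\epsilon_0>0$ with $h_f^\ast(\epsilon_0)=0$; then $h_f^\ast(\epsilon)=0$ for all $\epsilon\le\epsilon_0$, and $h_{\mathrm{top}}(f)<\infty$. Two soft facts are used throughout. (a) Since $f^i(C)$ is connected, and a connected set of diameter $>\rho$ contains, for each $r<\rho$, an $r$-separated subset of cardinality $\ge\rho/r$ (fix $p,q$ in it with $d(p,q)>\rho$; the image of the set under $x\mapsto d(p,x)$ contains $[0,\rho]$, so take preimages of $0,r,2r,\dots$), any iterate $f^i(C)$ of large diameter contains many $r$-separated points. (b) If $S$ is uniformly rigid then for every $\rho'>0$ the set $\{\,l\ge1:\sup_{x\in S}d(x,f^l(x))\le\rho'\,\}$ is infinite: otherwise it is finite with minimal attained value $c\ge0$, where $c>0$ contradicts uniform rigidity and $c=0$ forces $f^l=\mathrm{id}$ on $\overline S$ for some $l$, whence all multiples of $l$ lie in the set. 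So $C$ admits arbitrarily long ``return times'' $l$ with $\sup_{x\in C}d(x,f^l(x))$ as small as we wish. One may also assume $C$ is closed, since $f^i(\overline C)=\overline{f^i(C)}$ and $\overline C$ is again connected and uniformly rigid.

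The core step is: if $\operatorname{diam}C$ is small enough (in a way depending on $\beta$) and $D:=\sup_{i\ge0}\operatorname{diam}f^i(C)$ satisfies $D\le\epsilon_0/4$, then $D\le\beta$. Suppose not: $\operatorname{diam}f^N(C)>\beta$ for some $N$. Put $r=\beta/2$ and pick distinct $a_1,\dots,a_p\in C$ ($p\ge2$) whose $f^N$-images are $r$-separated; choose $\epsilon'=\beta/8<r/2$, let $\delta>0$ shadow $\epsilon'$-closely, and take a return time $l>N$ with $\sup_{x\in C}d(x,f^l(x))\le\delta/2$; we also require $\operatorname{diam}C\le\delta/2$, which dictates $\gamma$. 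For each word $\omega\in\{a_1,\dots,a_p\}^k$ (padded by repeating $a_1$) form the pseudo-orbit that follows the orbit of $\omega_j$ for $l$ steps and then jumps to $\omega_{j+1}$; the jump error is at most $\sup_{x\in C}d(x,f^l(x))+\operatorname{diam}C\le\delta$, so it is a $\delta$-pseudo orbit and is $\epsilon'$-shadowed by some $z_\omega$. If $\omega\ne\omega'$ first differ in block $j_0$, then at time $j_0l+N$ (inside block $j_0$ because $l>N$) the orbits of $z_\omega$ and $z_{\omega'}$ are $>r-2\epsilon'>0$ apart, so $\{z_\omega\}_{\omega\in\{a_1,\dots,a_p\}^k}$ is $(kl,\,r-2\epsilon')$-separated of size $p^k$. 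Comparing $z_\omega$ termwise with $x_0:=z_{(a_1,a_1,\dots)}$ gives $d(f^i(z_\omega),f^i(x_0))\le 2\epsilon'+\operatorname{diam}f^{\,i\bmod l}(C)\le 2\epsilon'+D\le\epsilon_0$ for all $i$ (using $D\le\epsilon_0/4$, $\epsilon'=\beta/8$; this needs $\beta\le3\epsilon_0$, and larger $\beta$ is trivial once the preliminary bound below is known). Hence every $z_\omega\in\Phi_{\epsilon_0}(x_0)$, and letting $k\to\infty$ gives $h(f,\Phi_{\epsilon_0}(x_0),\,r-2\epsilon')\ge(\log p)/l>0$, contradicting $h_f^\ast(\epsilon_0)=0$. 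Note this used h-expansiveness essentially: without the bound $D\le\epsilon_0/4$ the $z_\omega$ need not lie in one $\Phi_{\epsilon_0}(x_0)$, and the estimate then only bounds $h_{\mathrm{top}}(f)$ from below, which is harmless.

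So everything reduces to the preliminary bound: for $\operatorname{diam}C$ small, $D\le\epsilon_0/4$; this is the step I expect to be the real obstacle, precisely because one must confine the coded orbits to a single $\epsilon_0$-stable set. My plan is to stop the orbit of $C$ before it can reach the scale $\epsilon_0$: by uniform continuity of $f$ fix $u>0$ with $\operatorname{diam}S\le u\Rightarrow\operatorname{diam}f(S)<\epsilon_0$, and (if $D>u$) let $N$ be the first time with $\operatorname{diam}f^N(C)>u$; then $\operatorname{diam}f^m(C)<\epsilon_0$ for all $m\le N$. The delicate point will be to realize this initial segment inside the block construction of the second paragraph, i.e.\ to produce a return time $l$ for which the entire block $C,f(C),\dots,f^l(C)$ — not merely the part up to time $N$ — has all diameters below $\epsilon_0$; here one plays the (possibly large) return times of $C$ off against $N$ and the modulus of continuity, using that at each return time the orbit of $C$ descends back to small diameter, so that the excursion between ``reaching scale $u$'' and the relevant return can be kept controlled. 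Granting this, the argument of the second paragraph (run with target $u/2$, adjusting constants) forces $D\le u\le\epsilon_0/4$; combined with the core step it then forces $D\le\beta$, and taking $\gamma$ to be the minimum of the finitely many diameter thresholds produced along the way (and recalling $\beta$ may be shrunk and $C$ replaced by $\overline C$) completes the proof of Lemma 2.1.
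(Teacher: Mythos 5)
Your coding-and-entropy step is sound as far as it goes, but the proof is not complete: everything hinges on what you call the ``preliminary bound'' --- that $\operatorname{diam}C$ small forces $D=\sup_{i\ge0}\operatorname{diam}f^i(C)\le\epsilon_0/4$ --- and that bound \emph{is} the lemma (with $\beta=\epsilon_0/4$), so you have reduced the statement to a special case of itself without proving that case. The plan you sketch for it does not close: taking $N$ to be the first time $\operatorname{diam}f^N(C)>u$ controls the diameters only up to time $N$, whereas your block construction needs all of $C,f(C),\dots,f^l(C)$ (with $l>N$ a return time) confined below $\epsilon_0$, and nothing prevents the diameter from growing far past $\epsilon_0$ between time $N$ and the next return. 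You correctly flag this as ``the real obstacle,'' and it remains open in your write-up.

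The paper closes exactly this gap with a different use of connectedness. Having fixed $\epsilon$ with $h_f^\ast(5\epsilon)=0$ and $3\epsilon\le\beta$, and a return time $l>k$ where $\operatorname{diam}f^k(C)>3\epsilon$, one applies the intermediate value theorem to the continuous function $w\mapsto\sup_{0\le i\le l}d(f^i(x),f^i(w))$ on the connected set $C$ (it vanishes at $w=x$ and exceeds $3\epsilon$ somewhere) to produce a pair $x,y\in C$ with $\sup_{0\le i\le l}d(f^i(x),f^i(y))$ \emph{exactly} equal to $3\epsilon$. This single pair replaces your $p$ separated points: the value being $3\epsilon>2\epsilon$ makes the two loops $\gamma_0,\gamma_1$ (both based at a fixed $z\in C$) distinguishable under $\epsilon$-shadowing, while the value being at most $3\epsilon$ forces every shadowing point of every concatenation $\gamma_{u_1}\gamma_{u_2}\cdots$ to stay within $5\epsilon$ of every other, so the whole coded family sits in a single set $\Phi_{5\epsilon}(p)$ with no a priori bound on $\sup_i\operatorname{diam}f^i(C)$ needed. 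Then $h_f^\ast(5\epsilon)\ge\frac1l\log2>0$ gives the contradiction. If you want to repair your argument, replacing your separated family $\{a_1,\dots,a_p\}$ by such an IVT-selected pair is the missing idea.
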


\begin{proof}
In order to obtain a contradiction, we assume the contrary. Then, there is $\epsilon>0$ such that
\begin{itemize}
\item $h_f^\ast(5\epsilon)=0$,
\item for any $0<\gamma<\epsilon$, there is a subset $C$ of $X$ such that
\begin{itemize}
\item ${\rm diam}\:C\le\gamma$,
\item $C$ is connected and uniformly rigid,
\item ${\rm diam}\:f^k(C)>3\epsilon$ for some $k>0$. 
\end{itemize}
\end{itemize}
We take $\delta>0$ such that every $\delta$-pseudo orbit of $f$ is $\epsilon$-shadowed by some point of $X$. Let $0<\gamma<\epsilon$, $C$, and $k>0$ as above. Since $C$ is uniformly rigid, we have
\[
\sup_{x\in C}d(x,f^l(x))\le\gamma
\]
for some $l>k$. Since $C$ is connected, there are $x,y\in C$ such that
\[
\sup_{0\le i\le l}d(f^i(x),f^i(y))=3\epsilon.
\]
We fix $z\in C$. Let
\[
\gamma_0=(x_i)_{i=0}^l=(z,f(x),f^2(x),\dots,f^{l-1}(x),z)
\]
and
\[
\gamma_1=(y_i)_{i=0}^l=(z,f(y),f^2(y),\dots,f^{l-1}(y),z).
\]
If $\gamma$ is sufficiently small, then $\gamma_0$ and $\gamma_1$ are $\delta$-chains of $f$. Note that
\[
\sup_{0\le i\le l}d(x_i,y_i)=3\epsilon.
\]
For $u=(u_j)_{j\ge1}\in\{0,1\}^\mathbb{N}$, let
\[
\xi_u=(x_i^{u})_{i\ge0}=\gamma_{u_1}\gamma_{u_2}\gamma_{u_3}\cdots,
\]
a $\delta$-pseudo orbit of $f$. Let
\[
Y=\{p\in X\colon\:\text{$\xi_u$ is $\epsilon$-shadowed by $p$ for some $u\in\{0,1\}^\mathbb{N}$}\}
\]
and define a map $\pi\colon Y\to\{0,1\}^\mathbb{N}$ so that $\xi_{\pi(p)}$ is $\epsilon$-shadowed by $p$ for all $p\in Y$. It follows that
\begin{itemize}
\item $Y$ is a closed $f^l$-invariant subset of $X$,
\item $\pi$ is a surjective continuous map with $\pi\circ f^l=\sigma\circ\pi$, where $\sigma\colon\{0,1\}^\mathbb{N}\to\{0,1\}^\mathbb{N}$ is the shift map.
\end{itemize}
Note that
\[
\sup_{i\ge0}d(f^i(p),f^i(q))\le5\epsilon
\]
for all $p,q\in Y$. By taking $p\in Y$, we obtain $Y\subset\Phi_{5\epsilon}(p)$ and so
\[
h_f^\ast(5\epsilon)\ge h(f,\Phi_{5\epsilon}(p))\ge h(f,Y)\ge\frac{1}{l}h(f^l,Y)\ge\frac{1}{l}h(\sigma,\{0,1\}^\mathbb{N})=\frac{1}{l}\log{2}>0,
\]
which is a contradiction. Thus, the lemma has been proved.
\end{proof}

Given a continuous map $f\colon X\to X$, we say that a subset $S$ of $X$ is {\em pairwise recurrent} if for any $x,y\in S$ and $\gamma>0$, there is $l>0$ such that
\[
\max\{d(x,f^l(x)),d(y,f^l(y))\}\le\gamma.
\]

\begin{lem}
Let $f\colon X\to X$ be a continuous map which is h-expansive and satisfies the shadowing property. Given $0<\beta<\epsilon$ and non-empty subsets $E,F$ of $X$, if
\begin{itemize}
\item $h_f^\ast(\epsilon)=0$,
\item $E$ is connected,
\item $E\subset\overline{F}$,
\item $F$ is pairwise recurrent, 
\item $\sup_{i\ge0}{\rm diam}\:f^i(F)\le\beta$,
\end{itemize}
then $E$ is a singleton.
\end{lem}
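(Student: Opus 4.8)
The plan is to argue by contradiction, in the spirit of the proof of Lemma 2.1. Assume $E$ is not a singleton and pick $a,b\in E$ with $d(a,b)=:r>0$ (note $r\le{\rm diam}\,E\le{\rm diam}\,F\le\beta$). I will construct a compact $f^N$-invariant set $Y$ which is contained in some $\Phi_\epsilon(p)$ and which factors onto the full $2$-shift $(\{0,1\}^{\mathbb N},\sigma)$; then $h(f,Y)\ge\frac1N\log 2>0$, while at the same time $h(f,Y)\le h(f,\Phi_\epsilon(p))\le h_f^\ast(\epsilon)=0$, which is absurd. Compared with Lemma 2.1, the new point is that the separation between pseudo orbits has to be produced from the nondegeneracy of $E$ (through $r$) instead of from an expanding orbit segment, and that the whole construction must stay inside an $\epsilon$-tube rather than a $5\epsilon$-tube; this last feature is exactly what the hypothesis $\sup_{i\ge0}{\rm diam}\,f^i(F)\le\beta<\epsilon$ buys.

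I would first fix $\epsilon'>0$ with $\beta+2\epsilon'\le\epsilon$ and $\epsilon'<r/4$, take $\delta>0$ so that every $\delta$-pseudo orbit is $\epsilon'$-shadowed, and then choose a small mesh $\eta>0$ and a small $\gamma>0$ (with $\gamma+3\eta\le\delta$, and $\eta$ small enough that $r-2\eta>2\epsilon'$). Since $E$ is connected, there is a finite sequence $a=q_0,q_1,\dots,q_m=b$ in $E$ with $d(q_j,q_{j+1})<\eta$; using $E\subset\overline F$, pick $q_j'\in F$ with $d(q_j,q_j')<\eta$, so $d(q_0',a),d(q_m',b)<\eta$ and $d(q_j',q_{j+1}')<3\eta$. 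The key step, which I expect to be the main obstacle, is to find a single $l>0$ with $d(q_j',f^l(q_j'))\le\gamma$ for every $j$ simultaneously: a priori, pairwise recurrence of $F$ only yields a common near-return time for each pair, so here one must either use the boundedness $\sup_i{\rm diam}\,f^i(F)\le\beta$ to promote pairwise recurrence to finite-set recurrence along such a chain, or else reorganize the construction so that only consecutive pairs need a shared near-return time (at the price of a more delicate phase argument below). Granting such an $l$, set $P_j=(q_j',f(q_j'),\dots,f^{l-1}(q_j'))$, a word of length $l$; then the concatenations $P_jP_{j\pm1}$ and $P_jP_j$ all have junction error at most $\gamma+3\eta\le\delta$.

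With $N:=(2m+1)l$, define two words of length $N$ based at $q_0'$: $M_0=P_0P_0\cdots P_0$ ($2m+1$ copies), which idles at $q_0'$, and $M_1=P_0P_1\cdots P_{m-1}P_mP_{m-1}\cdots P_1P_0$, which makes an excursion out to $q_m'$ and back. Both are closed $\delta$-pseudo orbits, so for $u=(u_k)\in\{0,1\}^{\mathbb N}$ the concatenation $\xi_u=M_{u_1}M_{u_2}\cdots$ is a $\delta$-pseudo orbit; let $Y$ be the set of points $\epsilon'$-shadowing some $\xi_u$ and choose $\pi\colon Y\to\{0,1\}^{\mathbb N}$ with $\xi_{\pi(p)}$ $\epsilon'$-shadowed by $p$. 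At position $ml$ within the $k$-th block, $\xi_u$ equals $q_0'$ if $u_k=0$ and $q_m'$ if $u_k=1$, and $d(q_0',q_m')\ge r-2\eta>2\epsilon'$; this separation forces $\pi$ to be well defined, continuous, surjective, and a semiconjugacy $\pi\circ f^N=\sigma\circ\pi$, so $h(f,Y)\ge\frac1N h(f^N,Y)\ge\frac1N h(\sigma,\{0,1\}^{\mathbb N})=\frac1N\log 2>0$. Conversely, since $N$ is a multiple of $l$, at every time $i$ both $\xi_u(i)$ and $\xi_v(i)$ are of the form $f^{\,i\bmod l}(\,\cdot\,)$ applied to points of $F$, so $d(\xi_u(i),\xi_v(i))\le{\rm diam}\,f^{\,i\bmod l}(F)\le\beta$; hence any two points of $Y$ stay within $\epsilon'+\beta+\epsilon'\le\epsilon$ of each other under all iterates, i.e. $Y\subset\Phi_\epsilon(p)$ for every $p\in Y$, whence $h(f,Y)\le h_f^\ast(\epsilon)=0$ — the desired contradiction. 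The routine verifications (that $Y$ is closed and $f^N$-invariant, that $\pi$ is continuous, the elementary inequality $h(f,Y)\ge\frac1N h(f^N,Y)$, and the bookkeeping of the constants) I would carry out at the end.
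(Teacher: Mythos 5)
Your overall strategy --- encode a full $2$-shift by alternating an ``idling'' closed chain at one endpoint with an ``excursion'' closed chain out to the other endpoint, use $\sup_{i\ge0}{\rm diam}\:f^i(F)\le\beta$ to keep all the resulting pseudo-orbits in a single $\epsilon$-tube, and contradict $h_f^\ast(\epsilon)=0$ --- is exactly the paper's. But the step you yourself flag as the main obstacle is a genuine gap, and it is precisely the point where the paper's construction differs from yours. You need a \emph{single} $l>0$ with $d(q_j',f^l(q_j'))\le\gamma$ for all $j=0,\dots,m$ simultaneously, whereas pairwise recurrence only gives a common near-return time for each \emph{pair}. Neither of your suggested repairs goes through as stated: the diameter bound only yields $d(q_j',f^l(q_j'))\le 2\beta+\gamma$ from a near-return time $l$ of $q_0'$, which is not small; and reorganizing so that only \emph{consecutive} pairs $(q_j',q_{j+1}')$ share a return time does not help either, because the idling word must be a closed $\delta$-chain based at $q_0'$ whose block lengths match those of the excursion word, so $q_0'$ itself must nearly return at the end of \emph{every} block, not just the first. (Promoting pairwise recurrence to finite-set recurrence is a distality-type statement that is not available here, since $F$ is neither assumed closed nor invariant.)

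The paper's fix is to apply pairwise recurrence to the pairs $(x_0,x_j)$ for each $j$, obtaining possibly different lengths $l_j$ that are simultaneously near-return times for $x_0$ \emph{and} for $x_j$. The excursion word is then built from blocks $\beta_j=(x_j,f(x_j),\dots,f^{l_j-1}(x_j),x_{j+1})$ (a $2\delta$-chain because $f^{l_j}(x_j)$ is $\delta$-close to $x_j$, which is $\delta$-close to $x_{j+1}$), and the idling word is the concatenation of loops $\alpha_j=(x_0,f(x_0),\dots,f^{l_j-1}(x_0),x_0)$ of the \emph{same} lengths $l_j$ --- these close up exactly because each $l_j$ was also chosen as a near-return time for $x_0$. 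The block-by-block length matching keeps the two words within $\beta$ of each other at every coordinate (matching coordinates both lie in $f^i(F)$ for the same $i$), which is what your ``$i\bmod l$'' estimate was doing; everything else in your argument (the separation $>2\epsilon'$ at the apex of the excursion, the tube estimate $\beta+2\epsilon'\le\epsilon$, the factor map onto $(\{0,1\}^{\mathbb N},\sigma)$, and the entropy contradiction) then carries over essentially verbatim. So the proof is incomplete as written, but it becomes the paper's proof once the common-return-time step is replaced by this variable-length block construction.
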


\begin{proof}
In order to obtain a contradiction, we assume the contrary, i.e., $E$ is not a singleton. Let
\[
0<2\gamma<\min\{{\rm diam}\:E,\epsilon-\beta\}.
\]
We take $\delta>0$ such that every $2\delta$-pseudo orbit of $f$ is $\gamma$-shadowed by some point of $X$. Since $E$ is connected and $E\subset\overline{F}$, there are $x_0,x_1,\dots,x_k\in F$ such that
\begin{itemize}
\item $d(x_0,x_k)>2r$,
\item $\sup_{0\le j\le k-1}d(x_j,x_{j+1})\le\delta$.
\end{itemize}
Since $F$ is pairwise recurrent, for every $0\le j\le k$, we have
\[
\max\{d(x_0,f^{l_j}(x_0)),d(x_j,f^{l_j}(x_j))\}\le\delta
\]
for some $l_j>0$. For $0\le j\le k$, let
\[
\alpha_j=(p^{(j)}_i)_{i=0}^{l_j}=(x_0,f(x_0),\dots,f^{l_j-1}(x_0),x_0),
\]
a $\delta$-chain of $f$. For $0\le j\le k-1$, let
\[
\beta_j=(q^{(j)}_i)_{i=0}^{l_j}=(x_j,f(x_j),\dots,f^{l_j-1}(x_j),x_{j+1}),
\]
a $2\delta$-chain of $f$. Since $\sup_{i\ge0}{\rm diam}\:f^i(F)\le\beta$, we see that
\[
\sup_{0\le i\le l_j}d(p^{(j)}_i,q^{(j)}_i)\le\beta
\]
for all $0\le j\le k-1$. For $1\le j\le k$, let
\[
\tilde{\beta}_j=(r^{(j)}_i)_{i=0}^{l_j}=(x_j,f(x_j),\dots,f^{l_j-1}(x_j),x_{j-1}),
\]
a $2\delta$-chain of $f$. Since $\sup_{i\ge0}{\rm diam}\:f^i(F)\le\beta$, we see that
\[
\sup_{0\le i\le l_j}d(p^{(j)}_i,r^{(j)}_i)\le\beta
\]
for all $1\le j\le k$. Put
\[
L=l_0+l_k+2\sum_{j=1}^{k-1}l_j.
\]
Let
\[
\gamma_0=(y_i)_{i=0}^{L}=\alpha_0\alpha_1\cdots\alpha_{k-1}\alpha_k\alpha_{k-1}\cdots\alpha_1
\]
and
\[
\gamma_1=(z_i)_{i=0}^{L}=\beta_0\beta_1\cdots\beta_{k-1}\tilde{\beta}_k\tilde{\beta}_{k-1}\cdots\tilde{\beta}_1,
\]
$2\delta$-chains of $f$. Note that
\[
2\gamma<\sup_{0\le i\le L}d(y_i,z_i)\le\beta.
\]
For $u=(u_j)_{j\ge1}\in\{0,1\}^\mathbb{N}$, let
\[
\xi_u=(x_i^{u})_{i\ge0}=\gamma_{u_1}\gamma_{u_2}\gamma_{u_3}\cdots,
\]
a $2\delta$-pseudo orbit of $f$. Let
\[
Y=\{z\in X\colon\:\text{$\xi_u$ is $\gamma$-shadowed by $z$ for some $u\in\{0,1\}^\mathbb{N}$}\}.
\]
Defining a map $\pi\colon Y\to\{0,1\}^\mathbb{N}$ as in the proof of Lemma 2.1, we see that 
\begin{itemize}
\item $Y$ is a closed $f^L$-invariant subset of $X$,
\item $\pi$ is a surjective continuous map with $\pi\circ f^L=\sigma\circ\pi$, where $\sigma\colon\{0,1\}^\mathbb{N}\to\{0,1\}^\mathbb{N}$ is the shift map.
\end{itemize}
Note that
\[
\sup_{i\ge0}d(f^i(z),f^i(w))\le\beta+2\gamma\le\epsilon
\]
for all $z,w\in Y$. By taking $z\in Y$, similarly as in the proof Lemma 2.1, we obtain $Y\subset\Phi_{\epsilon}(z)$ and so
\[
h_f^\ast(\epsilon)\ge h(f,\Phi_{\epsilon}(z))\ge h(f,Y)\ge\frac{1}{L}\log{2}>0,
\]
which is a contradiction. This completes the proof of the lemma.
\end{proof}

The proof of the following lemma is left as an exercise for the reader. 

\begin{lem}
Let $D$ be a non-empty closed connected subset of $X$ which is not a singleton. Then, for any $\gamma>0$, there is a non-empty subset $E$ of $D$ such that
\begin{itemize}
\item $E$ is not a singleton,
\item $E$ is connected,
\item ${\rm diam}\:E\le\gamma$.
\end{itemize}
\end{lem}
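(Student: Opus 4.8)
The plan is to realize $E$ as a connected component of a sufficiently small closed ball around a suitably chosen point, intersected with $D$; once $E$ is defined in this way, every required property is immediate except non-degeneracy of the component, and that last property is precisely the boundary-bumping phenomenon for continua --- the only point needing a genuine argument. Concretely: if ${\rm diam}\:D\le\gamma$, take $E=D$, which by hypothesis is non-empty, connected, and not a singleton, with ${\rm diam}\:E\le\gamma$. So assume ${\rm diam}\:D>\gamma$ and fix $x,y\in D$ with $d(x,y)>\gamma$. Set $r=\gamma/2$, write $B(x,r)=\{z\in X\colon d(x,z)<r\}$, and put
\[
D_0=\{z\in D\colon d(x,z)\le r\}.
\]
Since $D$ is a closed subset of the compact space $X$ it is compact, and $D_0$ is closed in $D$, hence also compact; note $x\in D_0$, $y\notin D_0$, and ${\rm diam}\:D_0\le\gamma$. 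Let $E$ be the connected component of $x$ in $D_0$, so that $E$ is non-empty, connected, and ${\rm diam}\:E\le{\rm diam}\:D_0\le\gamma$. It remains only to show $E\ne\{x\}$.

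To prove $E\ne\{x\}$ I would argue by contradiction, using the standard fact that in a compact Hausdorff space the connected component of a point coincides with its quasicomponent, i.e.\ the intersection of all relatively clopen sets containing that point. Suppose $E=\{x\}$. The set $A=\{z\in D_0\colon d(x,z)=r\}$ is closed in $D_0$ and does not contain $x$; since $\{x\}$ is the intersection of the clopen-in-$D_0$ neighbourhoods of $x$, and $A$ is compact, a suitable finite sub-intersection $V$ of such neighbourhoods already satisfies $x\in V$ and $V\cap A=\emptyset$, whence $V\subset\{z\in D\colon d(x,z)<r\}$.

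The crux --- and the only step I expect to require care --- is to upgrade ``$V$ clopen in $D_0$'' to ``$V$ clopen in $D$''. Closedness is immediate, as $V$ is closed in $D_0$ and $D_0$ is closed in $D$. For openness, write $V=O\cap D_0$ with $O$ open in $D$; since $V\subset B(x,r)$ we get $V=O\cap B(x,r)\cap D$, which is open in $D$. Thus $V$ is a non-empty clopen subset of $D$, and it is proper because $y\in D\setminus V$ (indeed $y\notin D_0$). This contradicts the connectedness of $D$, so $E$ is not a singleton and has all the desired properties.

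Alternatively, once the set-up of the first paragraph is in place, one may instead quote the boundary bumping theorem for continua directly: $D$ is a non-degenerate continuum and $B(x,r)\cap D$ is a non-empty proper open subset of $D$, so the component of $x$ in $\overline{B(x,r)\cap D}$ meets $\overline{B(x,r)\cap D}\setminus(B(x,r)\cap D)$ and hence contains a point at distance at least $r$ from $x$; that component is connected, has diameter at most $\gamma$, and may be taken as $E$. Either route leaves only routine checks.
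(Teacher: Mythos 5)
Your proof is correct and complete; the paper itself leaves this lemma as an exercise, so there is no proof to compare against. Your argument is the standard one: cut $D$ with a small closed ball around a point, and use that components coincide with quasicomponents in compact Hausdorff spaces to show the component of the centre in $D_0=\{z\in D\colon d(x,z)\le r\}$ cannot be a singleton --- otherwise a clopen-in-$D_0$ neighbourhood avoiding the sphere $\{z\colon d(x,z)=r\}$ would be clopen and proper in $D$, contradicting connectedness. This is exactly the boundary-bumping phenomenon, and your alternative of quoting that theorem directly is equally valid; all the small checks (the case $\mathrm{diam}\,D\le\gamma$, the possibility that the sphere misses $D_0$, the upgrade from clopen in $D_0$ to clopen in $D$) are handled correctly.
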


By Lemmas 2.1, 2.2, and 2.3, we obtain the following lemma.

\begin{lem}
Let $f\colon X\to X$ be a continuous map which is h-expansive and satisfies the shadowing property. Given any non-empty closed subset $D$ of $X$, if $D$ is connected and uniformly rigid, then $D$ is a singleton.
\end{lem}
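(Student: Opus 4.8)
The plan is to argue by contradiction, funneling a hypothetical bad set through Lemmas 2.1, 2.2, and 2.3 in turn. So suppose, toward a contradiction, that $D$ is a non-empty closed connected uniformly rigid subset of $X$ that is not a singleton.

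Since $f$ is h-expansive, I would first fix $\epsilon>0$ with $h_f^\ast(\epsilon)=0$ and then choose $\beta$ with $0<\beta<\epsilon$. Applying Lemma 2.1 to this $\beta$ yields $\gamma>0$ with the property that every connected uniformly rigid subset of $X$ of diameter at most $\gamma$ has all of its forward iterates of diameter at most $\beta$.

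Next I would apply Lemma 2.3 to $D$ and this $\gamma$ to obtain a non-empty connected subset $C\subset D$ with ${\rm diam}\:C\le\gamma$ that is not a singleton. Because $C\subset D$ and $D$ is uniformly rigid, the remark on uniformly rigid subsets gives that $C$ is uniformly rigid, hence in particular pairwise recurrent. Thus $C$ satisfies the hypotheses of Lemma 2.1, and therefore $\sup_{i\ge0}{\rm diam}\:f^i(C)\le\beta$.

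Finally I would invoke Lemma 2.2 with $E=F=C$: indeed $C$ is non-empty and connected, $C\subset\overline{C}$, $C$ is pairwise recurrent, $\sup_{i\ge0}{\rm diam}\:f^i(C)\le\beta$, $0<\beta<\epsilon$, and $h_f^\ast(\epsilon)=0$. Lemma 2.2 then forces $C$ to be a singleton, contradicting the choice of $C$. Hence $D$ must be a singleton. The argument is a direct assembly of the three preceding lemmas, so I do not expect a genuine obstacle; the only points requiring care are the order in which the constants are chosen ($\epsilon$ before $\beta$, and $\gamma$ from Lemma 2.1 before calling Lemma 2.3) and the observation that passing to the subset $C$ preserves uniform rigidity, which is exactly what makes Lemma 2.1 applicable to $C$ and Lemma 2.2 applicable with $E=F=C$.
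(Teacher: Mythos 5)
Your proposal is correct and follows exactly the paper's own argument: assume $D$ is not a singleton, use Lemma 2.1 (with $\beta<\epsilon$ where $h_f^\ast(\epsilon)=0$) and Lemma 2.3 to extract a small non-degenerate connected uniformly rigid subset with all iterates of diameter at most $\beta$, and then contradict Lemma 2.2 with $E=F$. The extra care you take with the order of choosing constants and with the heredity of uniform rigidity under passing to subsets is exactly what the paper's terser write-up implicitly relies on.
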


\begin{proof}
In order to obtain a contradiction, we assume the contrary, i.e., $D$ is not a singleton. Then, by Lemma 2.1 and Lemma 2.3, there are $0<\beta<\epsilon$ and a non-empty subset $E$ of $D$ such that
\begin{itemize}
\item $h_f^\ast(\epsilon)=0$,
\item $E$ is not a singleton,
\item $E$ is connected and uniformly rigid; and so pairwise recurrent,
\item $\sup_{i\ge0}{\rm diam}\:f^i(E)\le\beta$,
\end{itemize}
which contradicts Lemma 2.2. Thus, the lemma has been proved. 
\end{proof}

By Lemma 2.4, we prove Theorem 1.1.

\begin{proof}[Proof of Theorem 1.1]
Let $S$ be a non-empty uniformly rigid subset of $X$. Then, $\overline{S}$ is a non-empty closed uniformly rigid subset of $X$. If $\overline{S}$ is not totally disconnected, then there is a connected component $D$ of $\overline{S}$ which is not a singleton. It follows that $D$ is a non-empty closed subset of $X$ such that
\begin{itemize}
\item $D$ is not a singleton,
\item $D$ is connected and uniformly rigid,
\end{itemize}
which contradicts Lemma 2.4. This implies that $\overline{S}$ is totally disconnected and so zero-dimensional; therefore, we conclude that $S$ is zero-dimensional, completing the proof of the theorem.
\end{proof}

\section{Examples}

In this section, we give several examples.

\begin{ex}
\normalfont
A homeomorphism $f\colon X\to X$ is said to be {\em expansive} if there is $e>0$ such that
\[
\sup_{i\in\mathbb{Z}}d(f^i(x),f^i(y))\le e
\]
implies $x=y$ for all $x,y\in X$. It is known that every expansive homeomorphism $f\colon X\to X$ is h-expansive (see \cite{B1}). If a homeomorphism $f\colon X\to X$ is expansive, then
\[
Fix(f^j)=\{x\in X\colon f^j(x)=x\}
\]
is a finite set for all $j\ge1$; therefore, $Per(f)$ is a countable set and so zero-dimensional. We also know that if a homeomorphism $f\colon X\to X$ is expansive and has the shadowing property, then $Per(f)$ is dense in the chain recurrent set for $f$ (see Chapter 3 of \cite{AH}).
\end{ex}

\begin{ex}
\normalfont
Let $C$ be the Cantor ternary set in the interval $[0,1]$ and let $f=id_C\colon C\to C$, the identity map. By $h_{\rm top}(f)=0$, $f$ is h-expansive. It is easy to see that $f$ has the shadowing property. Note that
\[
Per(f)=Fix(f)=C
\]
is zero-dimensional but an uncountable set.
\end{ex}

\begin{ex}
\normalfont
Let $I=[0,1]$ and let $f=id_I\colon I\to I$, the identity map. By $h_{\rm top}(f)=0$, $f$ is h-expansive. We easily see that $f$ does not have the shadowing property. Note that
\[
Per(f)=Fix(f)=I
\]
is not zero-dimensional. 
\end{ex}

\begin{ex}
\normalfont
Given a sequence $m=(m_j)_{j\ge1}$ of integers such that $2\le m_1<m_2<\cdots$ and $m_j|m_{j+1}$ for each $j\ge1$, let
\begin{itemize}
\item $X(j)=\{0,1,\dots,m_j-1\}$ (with the discrete topology),
\item
\[
X_m=\{(x_j)_{j\ge1}\in\prod_{j\ge1}X(j)\colon x_j\equiv x_{j+1}\pmod{m_j}\:\:\text{for all $j\ge1$}\},
\]
\item $g_m(x)_j=x_j+1\pmod{m_j}$ for all $x=(x_j)_{j\ge1}\in X_m$ and $j\ge1$.
\end{itemize}
We regard $X_m$ as a subspace of the product space $\prod_{j\ge1}X(j)$. The homeomorphism
\[
g_m\colon X_m\to X_m
\]
is called an {\em odometer} with the periodic structure $m$. Since $g_m$ is {\em equicontinuous} and $X_m$ is totally disconnected, $g_m$ has the shadowing property (see, e.g., Theorem 4 of \cite{Moo}). By $h_{\rm top}(g_m)=0$, $g_m$ is h-expansive. Note that $Per(g_m)=\emptyset$. 
\end{ex}

\begin{ex}
\normalfont
We define a map
\[
g\colon[0,1]\to[0,1]
\]
by $g(y)=y^2$ for all $y\in[0,1]$. Let $X=[0,1]^\mathbb{N}$ and let $f\colon X\to X$ be the map defined by
\[
f(x)_j=g(x_j)
\]
for all $x=(x_j)_{j\ge1}\in X$ and $j\ge1$. Since $h_{\rm top}(g)=0$, we have $h_{\rm top}(f)=0$; therefore, $f$ is h-expansive. Since $g$ has the shadowing property, so does $f$. Note that
\[
Per(f)=Fix(f)=\{0,1\}^\mathbb{N}.
\]
\end{ex}

\begin{ex}
\normalfont
Let $X=[0,1]^\mathbb{Z}$ and let $f\colon X\to X$ be the shift map, i.e.,
\[
f(x)_j=x_{j+1}
\]
for all $x=(x_j)_{j\in\mathbb{Z}}\in X$ and $j\in\mathbb{Z}$. We know that $f$ has the shadowing property (see Theorem 2.3.12 of \cite{AH}). Note that
\[
Fix(f)=\{x=(x_j)_{j\in\mathbb{Z}}\in X\colon x_j=x_{j+1}\:\text{for all $j\in\mathbb{Z}$}\}
\]
is not zero-dimensional; therefore, $Per(f)$ is not zero-dimensional. Note also that $h_{\rm top}(f)=\infty$, in particular, $f$ is not h-expansive. Let $S^1=\{z\in\mathbb{C}\colon|z|=1\}$. Let $\alpha\in(0,1)\setminus\mathbb{Q}$ and let $R_\alpha\colon S^1\to S^1$ be the map defined by $R_\alpha(z)=z\cdot e^{2\pi i\alpha}$ for all $z\in S^1$. We know that there is an injective continuous map
\[
h\colon S^1\to X
\]
such that $h\circ R_\alpha=f\circ h$ (see, e.g., \cite{L}). Since $S^1$ is uniformly rigid with respect to $R_\alpha$, $h(S^1)$ is uniformly rigid with respect to $f$. Note that $h(S^1)$ is homeomorphic to $S^1$ and so is not zero-dimensional. 
\end{ex}

\appendix

\section{}

In this Appendix A, we consider closed invariant subsets and present a corollary of Lemma 2.2. For a continuous map $f\colon X\to X$, we say that a subset $S$ of $X$ is {\em $f$-invariant} if $f(S)\subset S$. We say that a subset $S$ of $X$ is {\em pairwise recurrent} if for any $x,y\in S$ and $\gamma>0$, there is $l>0$ such that
\[
\max\{d(x,f^l(x)),d(y,f^l(y))\}\le\gamma.
\]
Note that if a closed $f$-invariant subset $S$ of $X$ is pairwise recurrent, then $f|_S\colon S\to S$ is a homeomorphism; and as a consequence of Proposition 1 of \cite{BHR}, $h_{\rm top}(f|_S)=0$.

A continuous map $f\colon X\to X$ is said to be {\em positively continuum-wise expansive} \cite{Kat} if there is $c>0$ such that for any non-empty closed connected subset $E$ of $X$ which is not a singleton, we have
\[
{\rm diam}\:f^i(E)>c
\]
for some $i\ge0$. By Theorem 5.8 of \cite{Kat}, we know that if a continuous map $f\colon X\to X$ is positively continuum-wise expansive and satisfies $h_{\rm top}(f)=0$, then $X$ is zero-dimensional.

Given a continuous map $f\colon X\to X$ and a closed $f$-invariant subset $S$ of $X$, we say that $f|_S\colon S\to S$ is {\em equicontinuous} if for any $\epsilon>0$, there is $\delta>0$ such that $d(x,y)\le\delta$ implies
\[
\sup_{i\ge0}d(f^i(x),f^i(y))\le\epsilon
\]
for all $x,y\in S$. It is known that if $f|_S$ is surjective and equicontinuous, then $f|_S$ is a homeomorphism and $f|_S^{-1}$ is also equicontinuous. If $f|_S$ is an equicontinuous homeomorphism, then $f|_S$ is {\em distal}, i.e.,
\[
\inf_{i\in\mathbb{Z}}d(f^i(x),f^i(y))>0
\]
for all distinct $x,y\in S$. It is also known that if $f|_S$ is a distal homeomorphism, then $S$ is pairwise recurrent. By results of \cite{AGW}, we know that whenever $S$ is zero-dimensional, the following conditions are equivalent:
 \begin{itemize}
\item $S$ is pairwise recurrent,
\item $f|_S$ is a distal homeomorphism,
\item $f|_S$ is an equicontinuous homeomorphism.
\end{itemize}

By Lemma 2.2 in Section 2, we obtain the following theorem.

\begin{thm}
Let $f\colon X\to X$ be a continuous map which is h-expansive and satisfies the shadowing property. For any non-empty closed $f$-invariant subset $S$ of $X$, if $S$ is pairwise recurrent (in particular, if $f|_S\colon S\to S$ is a distal homeomorphism), then
\begin{itemize}
\item $S$ is zero-dimensional,
\item $f|_S\colon S\to S$ is an equicontinuous homeomorphism. 
\end{itemize}
\end{thm}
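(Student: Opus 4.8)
The plan is to deduce Theorem~A.1 from Lemma~2.2 by first showing that $f|_S\colon S\to S$ is positively continuum-wise expansive, and then invoking Theorem~5.8 of \cite{Kat} together with the vanishing of $h_{\rm top}(f|_S)$. One cannot simply imitate the proof of Theorem~1.1 here, because a pairwise recurrent connected subset of $S$ need not be uniformly rigid, so Lemma~2.1 is unavailable; the point is that Lemma~2.2 by itself already forces continuum-wise expansive behaviour, after which zero-dimensionality follows from a known structure theorem.

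First I would fix $\epsilon>0$ with $h_f^\ast(\epsilon)=0$ and set $\beta=\epsilon/2$, so that $0<\beta<\epsilon$. Since $S$ is a closed, $f$-invariant, pairwise recurrent subset of $X$, the map $f|_S$ is a homeomorphism and $h_{\rm top}(f|_S)=0$, as noted in this appendix (the latter via Proposition~1 of \cite{BHR}); thus $(S,f|_S)$ is a continuous self-map of the compact metric space $S$ with zero topological entropy.

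The key step is to verify that $f|_S$ is positively continuum-wise expansive with expansive constant $\beta$. Let $E$ be any non-empty closed connected subset of $S$ which is not a singleton, and suppose toward a contradiction that ${\rm diam}\:f^i(E)\le\beta$ for every $i\ge0$. Being a subset of the pairwise recurrent set $S$, the set $E$ is itself pairwise recurrent, and since $E$ is closed one has $E=\overline{E}$. Hence, applying Lemma~2.2 with $F=E$ and with $\epsilon$ and $\beta$ as above, one checks that every hypothesis of that lemma holds, so $E$ must be a singleton --- a contradiction. Therefore ${\rm diam}\:f^i(E)>\beta$ for some $i\ge0$, which is precisely the assertion that $f|_S$ is positively continuum-wise expansive with constant $\beta$.

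Granting this, Theorem~5.8 of \cite{Kat} applied to $(S,f|_S)$ gives that $S$ is zero-dimensional, which is the first conclusion. For the second, $S$ is now zero-dimensional and pairwise recurrent, so the equivalence from \cite{AGW} recalled above yields that $f|_S\colon S\to S$ is an equicontinuous homeomorphism. The only step requiring real attention is the reduction just carried out, namely the observation that an arbitrary closed connected non-singleton subset $E$ of $S$ may be substituted for $F$ in Lemma~2.2 --- for which one uses that subsets of pairwise recurrent sets remain pairwise recurrent and that $E=\overline{E}$; once this is noticed, no further estimates are needed and the remainder is assembled from the cited results.
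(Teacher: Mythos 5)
Your proposal is correct and follows essentially the same route as the paper: fix $0<\beta<\epsilon$ with $h_f^\ast(\epsilon)=0$, apply Lemma~2.2 with $F=E$ (using that subsets of pairwise recurrent sets are pairwise recurrent and $E\subset\overline{E}$) to show $f|_S$ is positively continuum-wise expansive, then combine $h_{\rm top}(f|_S)=0$ (from \cite{BHR}) with Theorem~5.8 of \cite{Kat} for zero-dimensionality and the \cite{AGW} equivalence for equicontinuity. Your write-up in fact makes explicit the substitution $F=E$ in Lemma~2.2, which the paper leaves implicit.
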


\begin{proof}
Since $S$ is pairwise recurrent, we have $h_{\rm top}(f|_S)=0$. We take $0<\beta<\epsilon$ with
\[
h_f^\ast(\epsilon)=0.
\]
From Lemma 2.2, it follows that for any non-empty connected subset $E$ of $S$ which is not a singleton, we have
\[
{\rm diam}\:f^i(E)>\beta
\]
for some $i\ge0$. This implies that $f|_S$ is positively continuum-wise expansive. We conclude that $S$ is zero-dimensional, and thus $f|_S$ is an equicontinuous homeomorphism.   
\end{proof}

\end{document}